\renewcommand*\subjclass[2][2000]{%
  \def\@subjclass{#2}%
  \@ifundefined{subjclassname@#1}{%
    \ClassWarning{\@classname}{Unknown edition (#1) of Mathematics
      Subject Classification; using '1991'.}%
  }{%
    \@xp\let\@xp\subjclassname\csname subjclassname@#1\endcsname
  }%
}
\newtheorem{theorem}{Theorem}[section]
\newtheorem{lemma}[theorem]{Lemma}
\newtheorem*{lemma*}{Lemma}
\theoremstyle{definition}
\newtheorem{conjecture}[theorem]{Conjecture}
\theoremstyle{remark}
\newtheorem{remark}[theorem]{Remark}
\numberwithin{equation}{section}
\def\XXint#1#2#3{{\setbox0=\hbox{$#1{#2#3}{\int}$}
\vcenter{\hbox{$#2#3$}}\kern-.5\wd0}}
\begin{document}

\title{{The Norm of the Harmonic Bergman projection and Besov spaces }}
\subjclass[2010]{Primary 46E15,30H25}


\keywords{Harmonic Besov space, Bergman projection.}

 \author{Djordjije Vujadinovi\'c}
\address{ University of Montenegro, Faculty of Mathematics, Dzordza Va\v singtona  bb, 81000 Podgorica, Montenegro}
 \email{  djordjijevuj@t-com.me}
 \date{}
\begin{abstract}
We estimate the norm of the harmonic Bergman projection in the context of harmonic Besov spaces. We  obtain the two-side norm estimates in general $L^{p}-$case.
 \end{abstract}

\maketitle

\section{Introduction and Notation}

By $\mathbf{B}$ we denote the open unit ball in $\mathbf{R}^{n}$ for a fixed positive integer $n\geq 2.$ For $\alpha >0,$ the weighted measure $dv_{\alpha}$ on  $\mathbf{B}$  is defined by $dv_{\alpha}(x)=c_{\alpha}(1-|x|^{2})^{\alpha-1}dv(x)$ where $dv(x)$ is the Lebesgue volume measure on $\mathbf{B}.$ The constant $c_{\alpha}$ is chosen such that  $v_{\alpha}$ has a total mass 1, and it is given by
$$c_{\alpha}=\frac{2\Gamma(\frac{n}{2}+\alpha)}{n|\mathbf{B}|\Gamma(\frac{n}{2})\Gamma(\alpha)}.$$
Here $|\mathbf{B}|$ denotes the volume of $\mathbf{B}.$
Also, $d\tau(x)=(1-|x|^{2})^{-n}dv(x).$

The space of all harmonic functions $f$ on the unit ball $\mathbf{B}$  is denoted by $H(\mathbf{B})$ and we use the following notation for the partial derivatives $$|\partial^{m}f(x)|=\sum_{|k|=m}|\partial^{\alpha} f(x)|,\enspace \partial^{k}f(x)=\frac{\partial^{m}f(x)}{\partial x^{k}}.$$
Here  $k=(k_1,...,k_n)$ is multi-index and $|k|=\sum_{i=1}^{n}k_i=m.$

The spaces under consideration  through this paper are so-called the Besov spaces of harmonic functions in the unit ball.  Harmonic Besov spaces have been
studied early from different point of views in general settings of conditions and domains. For instance one can see the extensive study in \cite{kaptan}.

In this paper we follow the work of M.Jevti\'c and M.Pavlovi\'c (see \cite{Pavlov1}) on  the harmonic Besov space $B^{p}(\mathbf{B})$ $1\leq p\leq \infty$ which is defined to be the space consisting  of all harmonic functions $f\in  H(\mathbf{B})$ such that the function
$(1-|x|^{2})^{k}|\partial^{k} f(x)|$ belongs to $L^{p}(\mathbf{B},d\tau)$ for some positive integer $k,$ $k>\frac{n-1}{p}.$

More precisely, the next theorem holds:
\begin{theorem}
\label{important}
Let $1\leq p\leq \infty.$  If $f\in H(\mathbf{B}),$ then the following statements are equivalent:\\
a)There exists a positive integer $m_{0}>\frac{n-1}{p}$ such that $(1-|x|^{2})^{m_0}|\partial^{m_0}f(x)|\in L^{p}(\mathbf{B},d\tau).$\\
b)For all positive integers $m>\frac{n-1}{p}, (1-|x|^{2})^{m}|\partial^{m}f(x)|\in L^{p}(\mathbf{B},d\tau).$
\end{theorem}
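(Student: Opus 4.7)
The implication (b) $\Rightarrow$ (a) is immediate. For (a) $\Rightarrow$ (b) the natural strategy is to split into two sub-cases: passing from $m_{0}$ \emph{up} to some $m > m_{0}$, and passing from $m_{0}$ \emph{down} to any integer $m$ with $(n-1)/p < m < m_{0}$. The two directions rely on essentially different tools.

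The upward direction rests on the sub-mean-value property of $|\partial^{k}f|^{p}$, valid for $p\ge 1$ since each partial derivative $\partial^{k}f$ is again harmonic on $\mathbf{B}$. Combined with the classical Cauchy estimate for higher derivatives of harmonic functions, applied on balls of radius $r \sim 1-|x|$, one obtains, after using $(1-|x|)\sim (1-|y|)$ on such balls,
$$\bigl[(1-|x|^{2})^{m}|\partial^{m}f(x)|\bigr]^{p} \lesssim \dashint_{B(x,r/2)}\bigl[(1-|y|^{2})^{m_{0}}|\partial^{m_{0}}f(y)|\bigr]^{p}\,dv(y).$$
Integrating against $d\tau$ and applying Fubini --- the covering $\{B(x,r(x))\}$ has bounded overlap in the hyperbolic sense --- completes this direction; the case $p = \infty$ is even simpler and requires only the supremum form of the Cauchy estimate.

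For the downward direction one needs an integral representation recovering a lower-order derivative from a higher one. Starting from the harmonic Bergman reproducing formula $f(x) = \int_{\mathbf{B}} R_{s}(x,y) f(y)\,dv_{s}(y)$, valid for $s$ sufficiently large, repeated differentiation in $x$ together with an integration by parts in $y$ yields an identity of the form
$$\partial^{m}f(x) = \int_{\mathbf{B}} K_{m,m_{0}}(x,y)\,\partial^{m_{0}}f(y)\,(1-|y|^{2})^{m_{0}}\,d\tau(y),$$
where $K_{m,m_{0}}$ is built from derivatives of $R_{s}$ and satisfies the standard harmonic Bergman-kernel estimate $|K_{m,m_{0}}(x,y)| \lesssim \rho(x,y)^{-(n+m)}$ in a suitable distance $\rho$ on the ball. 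The matter then reduces to the $L^{p}(d\tau)$-boundedness of the positive integral operator with kernel $(1-|y|^{2})^{m_{0}}|K_{m,m_{0}}(x,y)|$, which I would establish by a Forelli--Rudin / Schur test argument with a power weight $h(x) = (1-|x|^{2})^{-a}$ for appropriate $a > 0$.

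The principal obstacle is this downward step, and specifically the verification of Schur's test near the threshold: the hypothesis $m > (n-1)/p$ is exactly what is needed to find an admissible Schur exponent $a$, and at the endpoints $p=1,\infty$ it secures the $L^{1}(dv)$-integrability of the kernel in the appropriate variable. At the borderline $m = (n-1)/p$ the integral $\int_{\mathbf{B}} \rho(x,y)^{-(n+m)}(1-|y|^{2})^{m_{0}}\,dv(y)$ just fails to converge, so the hypothesis on $m$ is sharp for this method.
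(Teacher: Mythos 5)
The paper itself contains no proof of this theorem --- it is imported from Jevti\'c--Pavlovi\'c \cite{Pavlov1} --- so I can only judge your argument on its own terms. Your architecture (upward via interior estimates, downward via a reproducing formula) is the standard one, and the upward half is complete and correct: the Cauchy estimate on $B(x,(1-|x|)/2)$, the sub-mean-value property of $|\partial^{m_0}f|^p$, the comparability $(1-|x|)\sim(1-|y|)$, and the bounded-overlap Fubini step all work, and indeed require no hypothesis on $m_0$.

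The gap is in the downward step, and it is precisely in the identity you assert rather than in the Schur test you defer. As written, the representation $\partial^{m}f(x)=\int_{\mathbf{B}}K_{m,m_0}(x,y)\,\partial^{m_0}f(y)\,(1-|y|^2)^{m_0}\,d\tau(y)$ with $|K_{m,m_0}(x,y)|\lesssim [x,y]^{-(n+m)}$ cannot hold: the measure is $(1-|y|^2)^{m_0-n}dv(y)$, and since $[x,y]\ge 1-|x|>0$ for fixed $x$, the kernel provides no decay as $|y|\to 1$, so the integral diverges for every bounded $\partial^{m_0}f$ whenever $m_0\le n-1$ (e.g.\ $n=3$, $p=\infty$, $m_0=1$, which your hypotheses allow). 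The fix is to keep the free parameter: differentiating $f=P_sf$ and integrating by parts $m_0$ times in $y$ forces $s-1\ge m_0$ for the boundary terms to vanish, and produces the weight $(1-|y|^2)^{s+m_0-1}dv(y)$ against a kernel of size $[x,y]^{-(n+s+m-1)}$ (compare \eqref{druga}); it is this large $s$ that makes both the convergence and the Forelli--Rudin/Schur computation go through, with $m>(n-1)/p$ entering through the admissibility of the Schur weight $(1-|x|^2)^{-a}$, equivalently through the integrability of $(1-|x|^2)^{mp-n}$. Your closing sharpness claim is also based on a miscomputation: $\int_{\mathbf{B}}[x,y]^{-(n+m)}(1-|y|^2)^{m_0}dv(y)$ is $I_{m_0+1,\,m-m_0}(x)$ in the notation of \eqref{bitno}, and since $m-m_0<0$ it is bounded by \eqref{studia} for every $m<m_0$; its convergence has nothing to do with the threshold $m=(n-1)/p$.
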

 The definition is independent from choice of integer $k.$

 We should say that the harmonic Besov space and the norm can be characterize in terms of certain fractional differential operators (see \cite{Pavlov1},Theorem 3.1). However, we restrict our attention to the norm provided by the classical partial derivatives.

 The Besov space-norm is defined  in following manner:
\begin{equation}
\label{bitnoo}
\|f\|_{B^{p}}=\sum_{|\alpha|<m}|\partial^{\alpha}f(0)|+\left(\int_{\mathbf{B}}(1-|x|^{2})^{mp}|\partial^{m}f(x)|^{p}d\tau(x)\right)^{1/p},\enspace 1\leq p<\infty,
\end{equation}
\begin{equation}
\label{bitnoo1}
\|f\|_{B^{\infty}}=\sum_{|\alpha|<m}|\partial^{\alpha}f(0)|+\sup_{x\in \mathbf{B}}(1-|x|^{2})^{m}|\partial^{m}f(x)|,\enspace p=\infty.
\end{equation}
We will denote by the same denotement of the norm \eqref{bitnoo1} the derived semi-norm:
\begin{equation}\label{normaprava1}
\|f\|_{B^{p}}=\left(\int_{\mathbf{B}}(1-|x|^{2})^{mp}|\partial^{m}f(x)|^{p}d\tau(x)\right)^{1/p}.
\end{equation}

In the rest of the paper we will consider only the semi-norm \eqref{normaprava1} and appropriate operator norm:
\begin{equation*}
\|T\|_{L^{p}(\mathbf{B},d\tau)\rightarrow B^{p}}=\sup_{\|f\|_{L^{p}(\mathbf{B},d\tau)}\neq 0}\frac{\|Tf\|_{B^{p}}}{\|f\|_{L^{p}(\mathbf{B},d\tau)}},
\end{equation*}
for a linear mapping $T: L^{p}(\mathbf{B},d\tau)\rightarrow B^{p}.$

{\bf Weighted harmonic Bergman kernel.}
The weighted harmonic Bergman kernel  arises in the context of weighted harmonic Hilbert-Bergman space $b_{\alpha}^{2}=b_{\alpha}^{2}(\mathbf{B}),$ which is the space of all complex-valued  $f$ functions such that the norm $$\|f\|_{L^{2}(\mathbf{B},dv_{\alpha})}=\left(\int_{\mathbf{B}}|f|^{2}dv_{\alpha}\right)^{1/2}<\infty$$ is finite.

Since the Hilbert space $b_{\alpha}^{2}$ is closed in the weighted Lebesgue space $L^{2}(\mathbf{B},dv_{\alpha})$ there is a unique reproducing kernel $R_{\alpha}(x,y)$ defined on $\mathbf{B}\times \mathbf{B}$ which is real and symmetric, such that $R_{\alpha}(x,\cdot)\in b_{\alpha}^{2}$ for any fixed $x\in \mathbf{B}$ and
$$f(x)=\int_{\mathbf{B}}f(y)R_{\alpha}(x,y)dv_{\alpha}(y),\enspace f\in b_{\alpha}^{2} .$$

Following \cite{Axler} (see [3,Chapter 5]) the unweighted harmonic Bergman kernel is given by the sequent formula
\begin{equation}
\label{netezina}
R_{0}(x,y)=\frac{1}{n|\mathbf{B}|}\sum_{k=0}^{\infty}(n+2k)Z_{k}(x,y),\enspace x,y\in \mathbf{B},
\end{equation}
where $Z_{k}$ are extended zonal harmonic.

The series \eqref{netezina} converges absolutely and uniformly on $K\times \mathbf{B}$ for every compact set $K\subset \mathbf{B}.$

More generally, the formula for the weighted harmonic Bergman kernel for $\alpha>0$ is given by
 \begin{equation}
\label{tezina}
R_{\alpha}(x,y)=\omega_{\alpha}\sum_{k=0}^{\infty}\frac{\Gamma(k+n/2+\alpha)}{\Gamma(k+n/2)}Z_{k}(x,y),\enspace x,y\in \mathbf{B},
\end{equation}
where $\omega_{\alpha}=\frac{\Gamma(n/2)}{\Gamma(n/2+\alpha)}.$ We will denote above coefficients, with $A_{k}^{n,\alpha}=\frac{\Gamma(k+n/2+\alpha)}{\Gamma(k+n/2)}.$

The weighted harmonic Bergman projection $P_{\alpha}$ is realized as an integral operator
$$P_{\alpha}f(x)=\int_{\mathbf{B}}f(y)R_{\alpha}(x,y)dv_{\alpha}(y),\enspace f\in L^{2}(\mathbf{B},dv_{\alpha}), x\in \mathbf{B}\enspace .$$
It is known that $P_{\alpha}$ is bounded on $L^{p}(\mathbf{B},dv_{\alpha})$ for $1<p<\infty.$

In the sequel, we would like to light the connection between the harmonic Besov space and the harmonic Bergman projection $P_{\alpha},$ which is expressed in the next theorem (see \cite{Pavlov1}).

\begin{theorem}
\label{karakter}
For the $1\leq p\leq \infty,$ the Bergman projection $P_{\alpha},\enspace \alpha>0,$ maps $L^{p}(\mathbf{B},d\tau)$ boundedly onto the harmonic Besov space $B^{p}.$
\end{theorem}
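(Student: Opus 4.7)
The plan is to treat boundedness and surjectivity of $P_\alpha\colon L^p(\mathbf{B},d\tau)\to B^p$ separately, both parts relying on size estimates for derivatives of the kernel $R_\alpha$.

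For the boundedness, fix an integer $m>(n-1)/p$ and let $g\in L^p(\mathbf{B},d\tau)$. Uniform convergence of the series \eqref{tezina} on compact subsets of $\mathbf{B}$ justifies differentiation under the integral, so that for each multi-index $k$ with $|k|=m$,
\[
\partial^k(P_\alpha g)(x)=\int_{\mathbf{B}} g(y)\,\partial_x^k R_\alpha(x,y)\,dv_\alpha(y).
\]
Rewriting $dv_\alpha=c_\alpha(1-|y|^2)^{n+\alpha-1}d\tau$ and summing over $|k|=m$, one gets the pointwise bound
\[
(1-|x|^2)^m|\partial^m(P_\alpha g)(x)|\leq \int_{\mathbf{B}} K(x,y)\,|g(y)|\,d\tau(y),
\]
with positive kernel
\[
K(x,y)=c_\alpha(1-|x|^2)^m\Bigl[\sum_{|k|=m}|\partial_x^k R_\alpha(x,y)|\Bigr](1-|y|^2)^{n+\alpha-1}.
\]
Using the standard pointwise estimate $|\partial_x^k R_\alpha(x,y)|\lesssim(1-2x\cdot y+|x|^2|y|^2)^{-(n+\alpha+m)/2}$ on the derivative kernel, a Schur test with power weights $h(y)=(1-|y|^2)^{-t}$ (for a suitable $t=t(n,\alpha,m,p)$) reduces the question to the classical Forelli--Rudin integrals on $\mathbf{B}$, yielding $L^p(d\tau)$-boundedness of the integral operator with kernel $K$ in the range $1<p<\infty$; the endpoints $p=1,\infty$ are handled by direct Fubini / supremum arguments.

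For the surjectivity, given $f\in B^p$ we construct $g\in L^p(\mathbf{B},d\tau)$ satisfying $P_\alpha g=f$ via a reproducing formula of the type
\[
f(x)=C_{m,\alpha}\int_{\mathbf{B}} \mathcal{D}^m f(y)\,R_\alpha(x,y)\,(1-|y|^2)^{\alpha+m-1}\,dv(y),
\]
where $\mathcal{D}^m$ is a suitable differential operator of order $m$ (for instance a linear combination of partial derivatives $\partial^k$, $|k|=m$, or a radial/invariant-Laplacian variant). Such an identity is obtained by expanding $f(y)=\sum_k f_k(y)$ into homogeneous harmonic components, exploiting the orthogonality of the zonal harmonics $Z_k$ against the measure $(1-|y|^2)^{\alpha-1}dv(y)$, and manipulating the explicit coefficients $A_k^{n,\alpha}$ occurring in \eqref{tezina}. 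Setting $g(y)=C'_{m,\alpha}(1-|y|^2)^m\mathcal{D}^m f(y)$ then yields $P_\alpha g=f$ by construction, while Theorem~\ref{important} gives $\|g\|_{L^p(d\tau)}\lesssim\|f\|_{B^p}$.

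The main technical obstacle lies in the surjectivity: one has to choose $\mathcal{D}^m$ and the accompanying weight exponent so that $|\mathcal{D}^m f|$ is dominated by $|\partial^m f|$ in the $L^p(d\tau)$-sense \emph{and} so that the series manipulation producing the reproducing identity is legitimate across the whole range $1\leq p\leq\infty$. The boundedness part, by contrast, is a standard Schur/Forelli--Rudin computation once the pointwise estimate on $\partial_x^k R_\alpha$ has been established.
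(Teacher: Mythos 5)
First, note that the paper itself gives no proof of Theorem~\ref{karakter}: it is quoted verbatim from Jevti\'c--Pavlovi\'c \cite{Pavlov1}, so there is nothing internal to compare your argument against. Judged on its own, your outline follows the standard route from the literature, and the boundedness half is essentially sound: differentiate under the integral, use the derivative estimate on the kernel, and run a Schur test with power weights against Forelli--Rudin type integrals. One correction there: the established bound \eqref{druga} is $|\partial_x^k R_\alpha(x,y)|\lesssim [x,y]^{-(n+\alpha+m-1)}$, not $[x,y]^{-(n+\alpha+m)}$ as you wrote; the exponent matters because the admissible range of the Schur weight $t$ (and hence the range of $p$) is read off from it. With the correct exponent this is exactly the computation the paper later carries out for the operators $T_k^\alpha$ in Lemma~\ref{opasno}.

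The genuine gap is in the surjectivity half. Your reproducing identity is obtained by expanding $f=\sum_j f_j$ into homogeneous harmonic components and exploiting the orthogonality of the $Z_j$ against $(1-|y|^2)^{\alpha-1}dv(y)$; for that computation to close, the operator $\mathcal{D}^m$ must act \emph{diagonally} on this expansion, i.e.\ $\mathcal{D}^m f=\sum_j d_j f_j$ with explicit scalars $d_j$. A linear combination of partial derivatives $\partial^k$, $|k|=m$, does not do this --- it maps $H_j$ into $H_{j-m}$, shifting degrees --- so your ``for instance'' choice fails and you are forced to the radial/fractional derivative $D^s f=\sum_j d_j(s) f_j$. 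But then the final step, $\|(1-|y|^2)^m\mathcal{D}^m f\|_{L^p(d\tau)}\lesssim\|f\|_{B^p}$, requires the equivalence of the partial-derivative seminorm \eqref{normaprava1} with the radial/fractional-derivative seminorm. Theorem~\ref{important}, which you invoke, only compares partial derivatives of different orders with one another and does not supply this; the needed equivalence is precisely the nontrivial content of Theorem~3.1 of \cite{Pavlov1}, which your proposal acknowledges as ``the main technical obstacle'' but leaves unproved. As it stands, the surjectivity argument is therefore a plan rather than a proof.
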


 The main goal of this paper is related to  the norm  estimations of the Bergman projection
 $$P_{\alpha}:L^{p}(\mathbf{B},d\tau)\rightarrow B^{p}, \enspace 1<p<\infty.$$

 The analogous estimates in the analytic case of Besov space in one-dimensional case were considered in \cite{djvu}.

\section*{Preliminaries}

{\bf Zonal harmonics}
In the introductory section we mentioned the zonal harmonics $Z_{j}(\xi,\eta),$ $\xi,\eta\in \mathbb{S}$ ($j$ is non-negative integer).
For more information about zonal harmonic we refer to \cite{Axler} (Chapter 5).

Since the space of all harmonic polynomials of degree $j$  $H_{j}(\mathbb{S})$ in $\mathbb{R}^{n}$ is a finite-dimensional inner-product space, the unique function $Z_{j}(\cdot,\eta)\in H_{j}(\mathbb{S})$ guaranteed by the classical Riesz theorem such that
$$p(\eta)=\int_{\mathbb{S}}p(\xi)\overline{Z_{j}(\xi,\eta)}d\sigma(\xi),$$
for all $p\in H_{j}(\mathbb{R}^{n})$ is called zonal harmonic.

$Z_{s}$ is a real valued function, which is symmetric, i.e., $Z_{j}(\xi,\eta)=Z_{s}(\eta,\xi).$

Also, the following properties hold
\begin{equation}
\label{zona1}
Z_{j}(\xi,\xi)=\dim{H_{j}(\mathbb{R}^{n})},
\end{equation}
\begin{equation}
\label{zona1}
|Z_{j}(\xi,\eta)|\leq \dim{H_{j}(\mathbb{R}^{n})}.
\end{equation}
where dimension of space $H_{j}(\mathbb{R}^{n})$ is known and given
\begin{equation}
\label{dimension}
\dim{H_{j}(\mathbb{R}^{n})}={n+j-1\choose n-1}-{n+j-3\choose n-1}.
\end{equation}

Moreover, we can express the explicit formula for the zonal harmonic. Namely, for $j>0,$ $x\in \mathbb{R}^{n}$ and $\xi\in \mathbb{S},$ then
$$Z_{j}(x,\xi)=(n+2j-2)\sum_{i=0}^{[j/2]}(-1)^{i}\frac{n(n+2)\cdot\cdot\cdot(n+2j-2i-4)}{2^{i}i!(j-2i)!}(x\cdot\xi)^{j-2i}|x|^{2i}.$$

{\bf Hypergeometric series}

 The hypergeometric function
${}_2F_{1}(a, b; c; t)$ is defined by the series expansion
$$\sum_{n=0}^{\infty}\frac{(a)_{n}(b)_{n}}{n!(c)_{n}}t^{n},\enspace \mbox{for}\enspace |t|<1,$$
and by the continuation elsewhere. Here $(a)_{n}$ denotes the shifted factorial, i.e.,
$(a)_{n} = a(a + 1)\cdot\cdot\cdot(a + n -1)$ with any real number a.

We recall some known identities for the hypergeometric function (for details se \cite{anderson}).

    Euler's identity:
      \begin{equation}\label{en1}{}F(a,b;c;x)=(1-x^2)^{c-a-b}{}F(c-a,c-b;c;x),\enspace \mbox{Re}{(c)}>\mbox{Re}{(b)}>0,\end{equation}
Gauss's identity:
\begin{equation}\label{gaus1}{}F(a,b;c;1)=\frac{\Gamma(c)\Gamma(c-a-b)}{\Gamma(c-a)\Gamma(c-b)},\enspace \mbox{Re}{(c-a-b)}>0,\end{equation}
   Differentiation identity:
 \begin{equation}\label{izvod}\frac{\partial}{\partial x}{}F(a,b;c;x)=\frac{ab}{c}{}F(a+1,b+1;c+1;x).\end{equation}

\section{The General $L^{p}-$ case}

According to the Theorem \ref{karakter} the harmonic Bergman projection maps the space $L^{p}(\mathbf{B},d\tau)$ onto the Besov space $B^{p}.$

Therefore for any $g\in B^{p},$ there exists $f\in L^{p}(\mathbf{B},d\tau)$ such that
\begin{equation}
\label{onto}
g(x)=P_{\alpha}f(x)=\int_{\mathbf{B}}R_{\alpha}(x,y)f(y)dv_{\alpha}(y),
\end{equation}
and

\begin{equation}
\label{onto1}
\partial_{x}^{m}g(x)=\int_{\mathbf{B}}\partial_{x}^{m} R_{\alpha}(x,y)f(y)dv_{\alpha}(y).
\end{equation}
It is clear that the growth rate of the derivative $\partial_{x}^{m}R_{\alpha}(x,y)$ will play a central role in the estimating the norm for the harmonic Bergman projection $P_{\alpha}.$

Let us mention that the first results in this direction were done in \cite{Pavlov}.
The authors proved the existence of the constants $C_{\alpha},\enspace C_{\alpha}^{m}$  in the following inequalities:

\begin{equation}
\label{prva}
|R_{\alpha}(x,y)|\leq C_{\alpha}|x-y|^{-n+1-\alpha},\enspace  |x|<1,\enspace\mbox{and}\enspace |y|=1.
\end{equation}

\begin{equation}
\label{druga}
|\partial_{x}^{m}R_{\alpha}(x,y)|\leq C_{\alpha}^{m}|rx-\xi|^{-n+1-\alpha-m},\enspace 0\leq r<1,\enspace\mbox{and}\enspace y=r\xi, |\xi|=1.
\end{equation}

The inequality \eqref{druga} give us inducement to observe the integral operators with the kernel $[x,y]^{-n+1-\alpha-m},$ where $$[x,y]=(1-2x\cdot y+|x|^{2}|y|^{2})^{1/2}.$$ More explicitly, we treat the operators
\begin{equation}
\label{pomocni1}
T_{k}^{\alpha}f(x)=\int_{\mathbf{B}}\frac{f(y)}{[x,y]^{n+\alpha+|k|-1}}dv_{\alpha}(y).
\end{equation}

The problem of estimating the norm in the initial case
\begin{equation*}
T^{\alpha}f(x)=\int_{\mathbf{B}}\frac{f(y)}{[x,y]^{n+\alpha-1}}dv_{\alpha}(y),
\end{equation*}
in the context of harmonic Bergman spaces over the unit ball, is done in \cite{Boo1}.

In the Theorem \eqref{thm1} we give the two-side norm estimate for the operator $T_{k}^{\alpha}.$ To do this we first established the Lemma \ref{lablem1} and Lemma \ref{opasno}.

Let us denote by
\begin{equation}
\label{bitno}
I_{\alpha,s}(x)=\int_{\mathbf{B}}\frac{(1-|y|^{2})^{\alpha-1}}{[x,y]^{n+\alpha+s-1}}dv(y),\enspace |x|<1,
\end{equation}
where, $\alpha> 0$ and $s<0$ except integers less than or equal to zero.

In \cite{Boo} authors considered the asymptotic behaviour for the function $I_{\alpha,s}$  when $|x|\rightarrow 1^{-}$ in more general setting of parameters $\alpha$ and $s.$

More precisely, they showed for any real number $s$ and $\alpha>-1$ that the following asymptotic relations hold
\begin{equation}
\label{studia}I_{\alpha,s}(x)\approx\left\{\begin{array}{rrr}
(1-|x|^{2})^{s},& s>0,\\
1-\log{(1-|x|^{2})},& s=0,\\
c,& s<0.
\end{array}
\right.\end{equation}
Here the the relation $X\approx Y$ means that the coefficient $X/Y$ is bounded by some positive constants from above and belove.

 Also, the sharp estimations of the function $I_{\alpha,s}$  were done in \cite{Boo1} in case when $0<\alpha<\nu,$ $0<s<\nu$ for the given $\nu.$

 The use of hypergeometric function in the mentioned  research is determined by the identity
 \begin{equation}
 \label{liu}
 \int_{\mathbb{S}}\frac{d\sigma(\xi)}{|x-\xi|^{c}}={}_2F_{1}\left(\frac{c}{2},\frac{c-n}{2}+1,\frac{n}{2};|x|^{2}\right),x\in \mathbf{B}
 \end{equation}
 for $c$ real (see \cite{liuu}, Lemma 2.1).

Since from \eqref{studia} we see that $$I_{\alpha,s}(x)=O(1),\enspace \mbox{as}\enspace |x|\rightarrow 1^{-},$$  for $s<0,$ we are interested in finding the upper and lower  bound for the function $I_{\alpha,s}(x).$

The following Lemma gives the answer.
\begin{lemma}
\label{lablem1}
For the function $I_{\alpha,s}(x)$ defined in \eqref{bitno}, where $-1<s+\alpha,$  the following identities hold
$$\max_{0\leq x\leq 1}I_{\alpha,s}(x)= C(\alpha,s),$$

\begin{equation}
\label{konstanta}
C(\alpha,s)=\frac{\pi^{n/2}\Gamma(\alpha)\Gamma(-s)}{\Gamma(\frac{\alpha-s+1}{2})\Gamma(\frac{n+\alpha-s-1}{2})},
\end{equation}
and
\begin{equation}
\label{konstanta1}
\min_{0\leq x\leq 1}I_{\alpha,s}(x)=\frac{\pi^{n/2}\Gamma(\alpha)}{\Gamma(\frac{n}{2}+\alpha)}.
\end{equation}
\end{lemma}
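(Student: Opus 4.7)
The plan is to derive a closed-form expression for $I_{\alpha,s}(x)$ as a hypergeometric function in $|x|^2$ and then extract both extrema from this representation.

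\textbf{Step 1.} I pass to spherical coordinates $y=r\xi$ with $r\in[0,1)$ and $\xi\in\mathbb{S}$. The algebraic identity $[x,r\xi]=|rx-\xi|$ puts the bracket into exactly the form needed to invoke \eqref{liu}, reducing the inner spherical integral to a hypergeometric function of $r^2|x|^2$. Expanding that ${}_2F_1$ as a power series in $r^2|x|^2$ and integrating termwise against $(1-r^2)^{\alpha-1}r^{n-1}$ on $[0,1]$ (a beta integral in $r^2$), one reassembles the result into a single hypergeometric function, yielding
\begin{equation*}
I_{\alpha,s}(x)=\frac{\pi^{n/2}\Gamma(\alpha)}{\Gamma(\frac{n}{2}+\alpha)}\,{}_2F_{1}\!\left(\tfrac{n+\alpha+s-1}{2},\tfrac{\alpha+s+1}{2};\tfrac{n}{2}+\alpha;|x|^{2}\right).
\end{equation*}

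\textbf{Step 2.} Under the standing hypotheses ($s<0$, $\alpha+s>-1$, $n\geq 2$), the first two parameters of the ${}_2F_1$ are strictly positive, so every coefficient in the power series is positive. Consequently the map $|x|\mapsto I_{\alpha,s}(x)$ is strictly increasing on $[0,1)$. Since ${}_2F_1(\cdot,\cdot;\cdot;0)=1$, the minimum is attained at $x=0$ with value $\pi^{n/2}\Gamma(\alpha)/\Gamma(\tfrac{n}{2}+\alpha)$, proving \eqref{konstanta1}.

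\textbf{Step 3.} The Gauss excess equals $\tfrac{n}{2}+\alpha-\tfrac{n+\alpha+s-1}{2}-\tfrac{\alpha+s+1}{2}=-s>0$, so the series converges at $1$. By Abel's theorem (valid because the coefficients are nonnegative),
\begin{equation*}
\lim_{|x|\to 1^{-}}I_{\alpha,s}(x)=\frac{\pi^{n/2}\Gamma(\alpha)}{\Gamma(\frac{n}{2}+\alpha)}\,{}_2F_{1}\!\left(\tfrac{n+\alpha+s-1}{2},\tfrac{\alpha+s+1}{2};\tfrac{n}{2}+\alpha;1\right),
\end{equation*}
and Gauss's identity \eqref{gaus1}, after canceling the factor $\Gamma(\tfrac{n}{2}+\alpha)$, evaluates this limit to $C(\alpha,s)$ in \eqref{konstanta}. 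Together with monotonicity this is the supremum (attained as $|x|\to 1^{-}$).

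The only delicate bookkeeping is in Step 1, where one must track the gamma factors so that the termwise series reassembles cleanly: using $(n/2)_{k}\Gamma(n/2)=\Gamma(n/2+k)$ and $\Gamma(n/2+k)/\Gamma(n/2+k+\alpha)=\Gamma(n/2)(n/2+\alpha)_{k}^{-1}/\Gamma(n/2+\alpha)$ (up to the common constant $\Gamma(\alpha)$) produces exactly the denominator parameter $\tfrac{n}{2}+\alpha$ appearing in the closed form. Once this representation is secured, positivity of coefficients and Gauss's formula do the rest.
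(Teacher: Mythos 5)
Your proposal is correct and follows essentially the same route as the paper: both reduce $I_{\alpha,s}$ via the spherical identity \eqref{liu} and termwise beta integration to the closed form $\frac{\pi^{n/2}\Gamma(\alpha)}{\Gamma(\frac{n}{2}+\alpha)}\,{}_2F_1\bigl(\frac{n+\alpha+s-1}{2},\frac{\alpha+s+1}{2};\frac{n}{2}+\alpha;|x|^2\bigr)$, then obtain the minimum at $x=0$ and the maximum at $x=1$ from monotonicity together with Gauss's identity \eqref{gaus1}. Your explicit verification that the Gauss excess equals $-s>0$ and the appeal to Abel's theorem make the boundary evaluation slightly more careful than the paper's, but the argument is the same.
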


\begin{proof}
The identity \eqref{liu} and the  uniform expansion of the hypergeometric function give
\begin{equation*}
\begin{split}
&I_{\alpha,s}(x)\\
&=n|\mathbf{B}|\int_{0}^{1}(1-t^{2})^{\alpha-1}t^{n-1}{}_2F_{1}\left(\frac{n+\alpha+s-1}{2},\frac{\alpha+s-1}{2}+1,\frac{n}{2};|x|^{2}t^{2}\right)dt\\
&=n|\mathbf{B}|\sum_{i=0}^{\infty}\frac{a_{i}}{i!}|x|^{2i}\\
\end{split}
\end{equation*}
where
\begin{equation*}
\begin{split}
a_{i}&=\frac{(\frac{n+\alpha+s-1}{2})_{i}(\frac{s+\alpha+1}{2})_{i}}{2(\frac{n}{2})_{i}}\int_{0}^{1}(1-t)^{\alpha-1}t^{\frac{n}{2}-1+i}dt\\
&=\frac{(\frac{n+\alpha+s-1}{2})_{i}(\frac{s+\alpha+1}{2})_{i}\Gamma(\alpha)\Gamma(\frac{n}{2}+i)}{2(\frac{n}{2})_{i}\Gamma(\frac{n}{2}+\alpha+i)}.
\end{split}
\end{equation*}
Thus,
$$I_{\alpha,s}(x)=\frac{n|\mathbf{B}|\Gamma(\alpha)\Gamma(\frac{n}{2})}{2\Gamma(\frac{n}{2}+\alpha)}{}_2F_{1}\left(\frac{n+\alpha+s-1}{2},\frac{\alpha+s+1}{2},\alpha+\frac{n}{2},|x|^{2}\right).$$
Further, the fact that the hypergeometric function ${}_2F_{1}(a,b,c;x), a>0,b>0,c>0, x\in [0,1]$ is monotone increasing with respect to $x$ (the property \eqref{izvod}) gives:
\begin{equation*}
\begin{split}
\max_{0\leq x\leq 1}I_{\alpha,s}(x)&=\frac{n|\mathbf{B}|\Gamma(\alpha)\Gamma(\frac{n}{2})}{2\Gamma(\frac{n}{2}+\alpha+1)}{}_2F_{1}\left(\frac{n+\alpha+s-1}{2},\frac{\alpha+s+1}{2},\alpha+\frac{n}{2};1\right)\\
&=\frac{\pi^{n/2}\Gamma(\alpha)\Gamma(-s)}{\Gamma(\frac{\alpha-s+1}{2})\Gamma(\frac{n+\alpha-s-1}{2})}.
\end{split}
\end{equation*}
The same argument implies:
\begin{equation*}
\begin{split}
\min_{0\leq x\leq 1}I_{\alpha,s}(x)&=\frac{n|\mathbf{B}|\Gamma(\alpha)\Gamma(\frac{n}{2})}{2\Gamma(\frac{n}{2}+\alpha)}{}_2F_{1}\left(\frac{n+\alpha+s-1}{2},\frac{\alpha+s+1}{2},\alpha+\frac{n}{2},0\right)\\
&=\frac{\pi^{n/2}\Gamma(\alpha)}{\Gamma(\frac{n}{2}+\alpha)}.
\end{split}
\end{equation*}
\end{proof}

According to the introduced norm \eqref{normaprava1}
we are going to treat the acting of the operator $T_{k}^{\alpha}$ in the following Lebesgue-space settings $$T_{k}^{\alpha}:L^{p}(\mathbf{B},d\tau)\rightarrow L^{p}(\mathbf{B},dv_{mp-n}),\enspace p>1,$$

where $dv_{mp}(x)=(1-|x|^{2})^{mp-n}dv(x),$ $|k|=m>\frac{n-1}{p}.$

We now turn to the finding the upper estimate for the norm of the operator $T_{^k}^{\alpha}.$

\begin{lemma}
\label{opasno}
For the defined operator $T_{k}^{\alpha}$  in \eqref{pomocni1} there is a constant $D_{p}^{|k|}$ such that
$$\|T_{k}^{\alpha}\|_{L^{p}(\mathbf{B},d\tau)\rightarrow L^{p}(\mathbf{B},d\tau)}\leq D_{p}^{|k|},$$ where
\begin{equation}
\label{tojeto1}
D_{p}^{|k|}=\frac{\Gamma(\frac{n}{2}+\alpha)\Gamma^{1/p}(\frac{p}{q}(n+|k|+\alpha-1))\Gamma^{1/q}(\frac{q}{p}|k|)}{\Gamma(\frac{n+\alpha-1}{q}+|k|+\frac{|k|+\alpha+1}{2})\Gamma(\frac{n+\alpha-1}{q}+|k|+\frac{|k|+n+\alpha-1}{2})},
\end{equation}
and $$\frac{1}{p}+\frac{1}{q}=1.$$
\end{lemma}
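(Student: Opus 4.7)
The plan is to derive the bound by a Schur--type H\"older estimate combined with two applications of Lemma~\ref{lablem1}.

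Starting from
$$T_k^\alpha f(x)=c_\alpha\int_{\mathbf{B}}\frac{(1-|y|^2)^{\alpha-1}}{[x,y]^{n+|k|+\alpha-1}}f(y)\,dv(y),$$
I would apply H\"older's inequality with exponents $p$ and $q$, inserting a free weight $(1-|y|^2)^\gamma$ with $\gamma$ to be fixed later. The $q$-power factor of the split is a pure--kernel integral of the form $I_{\alpha+\gamma q,\,|k|-\gamma q}(x)$ in the sense of \eqref{bitno}, while the $p$-power factor carries $|f(y)|^p$ against a complementary $(1-|y|^2)$-weight.

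Under the condition $\gamma q>|k|$, Lemma~\ref{lablem1} bounds the first factor by a finite, $x$-independent constant $M_1$ whose numerator is $\pi^{n/2}\Gamma(\alpha+\gamma q)\Gamma(\gamma q-|k|)$ and whose denominator is a product of two $\Gamma$-values of half-arguments. Next I would raise the resulting pointwise inequality to the $p$-th power, multiply by the target weight, and interchange the order of integration by Fubini. The inner integral in $x$ is again of the form $I_{|k|p-n+1,\,(n+\alpha-1)-|k|(p-1)}(y)$, and here the hypothesis $|k|>(n-1)/p$ enters critically: it forces $|k|p-n+1>0$, making Lemma~\ref{lablem1} applicable and producing an analogous constant $M_2$. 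Assembling, one obtains an estimate of the form $\|T_k^\alpha\|\le c_\alpha\,M_1^{1/q}M_2^{1/p}$, and the identity $c_\alpha=\Gamma(n/2+\alpha)/(\pi^{n/2}\Gamma(\alpha))$ absorbs the $\pi^{n/2}$-factors produced by $M_1$ and $M_2$, furnishing the leading $\Gamma(\frac{n}{2}+\alpha)$ in the numerator of $D_p^{|k|}$.

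The main obstacle is choosing $\gamma$ so that three conditions hold simultaneously. (i) The $s$-parameters in both applications of Lemma~\ref{lablem1} must be strictly negative, so that the maximum formula is available. (ii) The residual $y$-weight on $|f(y)|^p$ must equal $(1-|y|^2)^{-n}$, so that the last integral collapses to $\|f\|^p_{L^p(\mathbf{B},d\tau)}$. (iii) The denominator $\Gamma$-arguments produced by the two Lemma~\ref{lablem1} estimates must coincide pairwise, so that in the product $M_1^{1/q}M_2^{1/p}$ the fractional powers $1/q$ and $1/p$ add to one and the denominator $\Gamma$'s appear to the first power, reducing precisely to $\frac{n+\alpha-1}{q}+|k|+\frac{|k|+\alpha+1}{2}$ and $\frac{n+\alpha-1}{q}+|k|+\frac{|k|+n+\alpha-1}{2}$. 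The numerator factors $\Gamma^{1/p}\bigl(\frac{p}{q}(n+|k|+\alpha-1)\bigr)$ and $\Gamma^{1/q}\bigl(\frac{q}{p}|k|\bigr)$ then arise as the $\Gamma(-s)$-contributions coming from $M_2^{1/p}$ and $M_1^{1/q}$ respectively. Verifying that a single $\gamma$ realizes (i)--(iii) and carrying through the resulting $\Gamma$-function arithmetic is the essential labor of the proof; everything else is routine bookkeeping.
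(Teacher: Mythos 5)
There is a genuine gap, and it sits exactly where you locate ``the essential labor'': your three conditions (i)--(iii) cannot be satisfied simultaneously, because once you impose (ii) the auxiliary exponent is pinned to $\gamma=\frac{n+\alpha-1}{p}$ and there is no freedom left. With that $\gamma$ the $\Gamma(-s)$-contributions are $\Gamma\bigl(\tfrac{q}{p}(n+\alpha-1)-|k|\bigr)$ and $\Gamma\bigl(\tfrac{p}{q}|k|-(n+\alpha-1)\bigr)$, not the stated $\Gamma\bigl(\tfrac{q}{p}|k|\bigr)$ and $\Gamma\bigl(\tfrac{p}{q}(n+|k|+\alpha-1)\bigr)$ (matching them forces identities like $|k|p=n+\alpha-1$, which fail generically), and the two pairs of denominator arguments produced by Lemma~\ref{lablem1} differ by a fixed nonzero shift depending on $n+\alpha-1$, so they never ``coincide pairwise.'' The paper's mechanism for the denominator is different and is the step you are missing: after the Schur/H\"older estimate one accepts \emph{four distinct} denominator Gamma factors carrying the fractional exponents $1/p$ and $1/q$ (the quantity $\tilde D_p^{|k|}$ in \eqref{tojeto}), and only then uses the convexity of $\log\Gamma$ via Jensen's inequality, $\Gamma^{1/p}(A)\,\Gamma^{1/q}(B)\ge\Gamma\bigl(\tfrac{A}{p}+\tfrac{B}{q}\bigr)$, to merge each pair into a single first-power Gamma at the averaged argument $\frac{n+\alpha-1}{q}+|k|+\frac{|k|+\alpha+1}{2}$, resp.\ $\frac{n+\alpha-1}{q}+|k|+\frac{|k|+n+\alpha-1}{2}$. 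Without this log-convexity step the stated form of $D_p^{|k|}$ is unreachable.

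Two secondary discrepancies. First, the paper runs the Schur test for the kernel taken against $d\tau$, with a \emph{single} free parameter $c$ entering as $h^p$ in the $y$-integral and $h^q$ in the $x$-integral; the optimal choice $c=\frac{n+|k|+\alpha-1}{q}+\frac{|k|}{p}$ is what produces the numerators $\Gamma^{1/p}\bigl(\tfrac{p}{q}(n+|k|+\alpha-1)\bigr)$ and $\Gamma^{1/q}\bigl(\tfrac{q}{p}|k|\bigr)$ exactly. Your split of the kernel into $[x,y]^{-N/p}[x,y]^{-N/q}$ with a weight tied down by (ii) is an over-determined variant of this. Second, the lemma asserts boundedness $L^p(\mathbf{B},d\tau)\to L^p(\mathbf{B},d\tau)$; your Fubini step instead targets the weight $(1-|x|^2)^{|k|p-n}$, i.e.\ the map into $L^p(\mathbf{B},dv_{|k|p-n})$, which is handled in the paper only afterwards (Theorem~\ref{opasnat}) via the trivial comparison $(1-|x|^2)^{|k|p-n}\le(1-|x|^2)^{-n}$. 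The paper also works first on truncated balls $r\mathbf{B}$ with cut-off test functions and then extends to all of $L^p(\mathbf{B},d\tau)$, a point your outline does not address.
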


\begin{proof}
Let us denote by $L^{p}(r\mathbf{B},d\tau),0<r<1,$ the space defined as
$$L^{p}(r\mathbf{B},d\tau)=\{\psi(x)\chi_{r\mathbf{B}}(x)|\psi \in L^{p}(\mathbf{B}, d\tau)\},$$
where $\chi_{r\mathbf{B}}$ is a characteristic function of the set $r\mathbf{B}.$

Let $j$ be the positive integer such that $\frac{1}{\sqrt{j}}\leq r.$

We are going to estimate the norm of the operator $T_{k}^{\alpha}$  on the subspace $L^{p}(r\mathbf{B},d\tau).$
For this purpose we will  use  the classical Shur's test (see \cite{zhu}, Theorem 3.6).

 The test function $h_{j}$ is defined to be
$$h_{j}(x)=\left\{\begin{array}{rl}
              (1-|x|^{2})^{c},&0\leq |x|< \frac{1}{\sqrt{j}}\\
                            0,& \frac{1}{\sqrt{j}}\leq|x|<1
              \end{array}
              \right.$$

  where $c$ is a positive number such that $c\geq \max{\{\frac{n+|k|+\alpha-1}{q},\frac{|k|}{p}\}}.$

We have,
\begin{equation}\label{test12}
\begin{split}
\int_{\mathbf{B}}\frac{(1-|y|^{2})^{n+\alpha-1}h_{j}^{p}(y)}{[x,y]^{n+|k|+\alpha-1}}&d\tau(y)\\
&\leq\int_{\mathbf{B}}\frac{(1-|y|^{2})^{pc+\alpha-1}}{[x,y]^{n+|k|+\alpha-1}}dv(y)\\
&=I_{pc+\alpha,-pc+|k|}(x)\\
&\leq C(pc+\alpha,-pc+|k|)\\
&\leq \left(\frac{j}{j-1}\right)^{pc}C(pc+\alpha,-pc+|k|)h_{j}^{p}(x),
\end{split}
\end{equation}
and
\begin{equation}\label{test14}
\begin{split}
\int_{\mathbf{B}}\frac{(1-|y|^{2})^{n+\alpha-1}h_{j}^{q}(x)}{[x,y]^{n+|k|+\alpha-1}}&d\tau(x)\\
&\leq\int_{\mathbf{B}}\frac{(1-|y|^{2})^{n+\alpha-1}(1-|x|^{2})^{qc-n}}{[x,y]^{n+|k|+\alpha-1}}dv(x)\\
&= (1-|y|^{2})^{n+\alpha-1} I_{qc-n+1,-qc+n+|k|+\alpha-1}(y)\\
&\leq\left(\frac{j}{j-1}\right)^{qc} C(qc-n+1,-qc+n+|k|+\alpha-1)h_{j}^{q}(y).
\end{split}
\end{equation}

 The inequalities \eqref{test12} and \eqref{test14} imply
\begin{equation}
\begin{split}
\label{setovi}
&\|T_{k}^{\alpha}\|_{L^{p}(r\mathbf{B},d\tau)\rightarrow L^{p}(r\mathbf{B},d\tau)} \\
&\leq c_{\alpha}\left(\frac{j}{j-1}\right)^{2c}C^{\frac{1}{p}}(pc+\alpha,-pc+|k|)C^{\frac{1}{q}}(qc-n+1,-qc+n+|k|+\alpha-1)\\
&=c_{\alpha}\left(\frac{j}{j-1}\right)^{c}\pi^{n/2}\Gamma(\alpha)\left(\frac{\Gamma(pc-|k|)}{\Gamma(pc-\frac{|k|-\alpha-1}{2})\Gamma(pc-\frac{|k|-n-\alpha+1}{2})}\right)^{1/p}\\
&\times\left(\frac{\Gamma(qc-n-|k|-\alpha+1)}{\Gamma(qc-n-\frac{|k|+\alpha-3}{2})\Gamma(qc-\frac{|k|+n+\alpha-1}{2})}\right)^{1/q}.\\
\end{split}
\end{equation}
Taking $c=\frac{n+|k|+\alpha-1}{q}+\frac{|k|}{p}$ and
letting $j\rightarrow +\infty$ in \eqref{setovi}
 we obtain
 $$\|T_{k}^{\alpha}\|_{L^{p}(r\mathbf{B},d\tau)\rightarrow L^{p}(r\mathbf{B},d\tau)}\leq \tilde{D}_{p}^{|k|},$$
 where
 \begin{equation}
\label{tojeto}
\begin{split}
&\tilde{D}_{p}^{|k|}=\Gamma(\frac{n}{2}+\alpha)\\
&\times\frac{\Gamma^{1/p}(\frac{p}{q}(n+|k|+\alpha-1))\Gamma^{1/q}(\frac{q}{p}|k|)}{\Gamma^{1/p}(\frac{p}{q}(n+|k|+\alpha-1)+\frac{|k|+\alpha+1}{2})\Gamma^{1/p}(\frac{p}{q}(n+|k|+\alpha-1)+\frac{|k|+n+\alpha-1}{2})}\\
&\times\frac{1}{\Gamma^{1/q}(\frac{q}{p}|k|+\frac{|k|+\alpha+1}{2})\Gamma^{1/q}(\frac{q}{p}|k|+\frac{|k|+n+\alpha-1}{2})}.\\
\end{split}
\end{equation}
Further, since the function $\psi(x)=\log{\Gamma(x)}, x>0$ is convex, the Jensen's inequality give us the new inequality

\begin{equation}
\begin{split}
&\frac{1}{p}\psi\left(\frac{p}{q}(n+|k|+\alpha-1)+\frac{|k|+\alpha+1}{2}\right)+\frac{1}{q}\psi\left(\frac{q}{p}|k|+\frac{|k|+\alpha+1}{2}\right)\\
&+\frac{1}{p}\psi\left(\frac{p}{q}(n+|k|+\alpha-1)+\frac{|k|+n+\alpha-1}{2}\right)+\frac{1}{q}\psi\left(\frac{q}{p}|k|+\frac{|k|+n+\alpha-1}{2}\right)\\
&\geq \psi\left(\frac{n+\alpha-1}{q}+|k|+\frac{|k|+\alpha+1}{2}\right)+\psi\left(\frac{n+\alpha-1}{q}+|k|+\frac{|k|+n+\alpha-1}{2}\right),
\end{split}
\end{equation}
which implies
$$\tilde{D}_{p}^{|k|}\leq \frac{\Gamma(\frac{n}{2}+\alpha)\Gamma^{1/p}(\frac{p}{q}(n+|k|+\alpha-1))\Gamma^{1/q}(\frac{q}{p}|k|)}{\Gamma(\frac{n+\alpha-1}{q}+|k|+\frac{|k|+\alpha+1}{2})\Gamma(\frac{n+\alpha-1}{q}+|k|+\frac{|k|+n+\alpha-1}{2})}.$$

We aim to prove that the obtained upper bound stays preserved on the whole space $L^{p}(\mathbf{B},d\tau).$

For this purpose, let us denote by $p$ the functional defined on $L^{p}(r\mathbf{B},d\tau)$ with
$$p(u)=\|T_{k}^{\alpha}u\|_{L^{p}(r\mathbf{B},d\tau)},\enspace u\in L^{p}(r\mathbf{B},d\tau) .$$
It is clear that $p$ is sublinear functional, where
\begin{equation}
\label{normada}
\|p\|\leq \|T_{\alpha}^{k}\|_{L^{p}(r\mathbf{B},d\tau)\rightarrow L^{p}(r\mathbf{B},d\tau)}.
\end{equation}
The Han-Banach extension theorem implies that the extended functional $\tilde{p}: L^{p}(\mathbf{B},d\tau)\rightarrow \mathbb{R}$ has a bounded norm
$$\|\tilde{p}\|\leq \|T_{\alpha}^{k}\|_{L^{p}(r\mathbf{B},d\tau)\rightarrow L^{p}(r\mathbf{B},d\tau)},$$

i.e.,

\begin{equation*}
\|T_{\alpha}^{k}\|_{L^{p}(\mathbf{B},d\tau)\rightarrow L^{p}(\mathbf{B},d\tau)}\leq \|T_{\alpha}^{k}\|_{L^{p}(r\mathbf{B},d\tau)\rightarrow L^{p}(r\mathbf{B},d\tau)}.
\end{equation*}

\end{proof}

\begin{remark}
Let us point out the fact that the Stirling's asymptotic formula implies
$$\lim_{p\rightarrow +\infty} D_{p}^{|k|}=+\infty.$$
\end{remark}
In the next theorem we give the two-side norm estimate for the operator $T_{k}^{\alpha}.$
\begin{theorem}
\label{opasnat}
 Given multi-index $k,$ $|k|=m,$ there are constants $D_{p}^{m}$ and $A_{p}^{m}$ such that

\begin{equation}
\label{thm1}
A_{p}^{m}< \|T_{\alpha}^{k}\|_{L^{p}(\mathbf{B},d\tau)\rightarrow L^{p}(\mathbf{B},dv_{mp-n})}\leq D_{p}^{m},
\end{equation}

where  $D_{p}^{m}$ is already defined in \eqref{opasno} and
\begin{equation}
A_{p}^{m}=\frac{\pi^{n/2}\Gamma(p(m+\alpha)+\frac{n}{2}+1)\Gamma(m+\alpha+\frac{n}{p}+1)}{\Gamma(p(m+\alpha)+1)\Gamma(m+\alpha+\frac{n}{2}+\frac{n}{p}+1)}\left(\frac{\Gamma(pm-n+1)}{\Gamma(pm-\frac{n}{2}+1)}\right)^{1/p}.
\end{equation}
\end{theorem}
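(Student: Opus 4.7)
My plan is as follows. The upper bound $\|T_k^\alpha\|\le D_p^m$ is exactly Lemma \ref{opasno}, so only the strict lower bound $A_p^m<\|T_k^\alpha\|$ requires fresh work. Since $T_k^\alpha$ is a positive integral operator, the natural strategy is to exhibit a single explicit non-negative radial test $f$ and evaluate $\|T_k^\alpha f\|/\|f\|$ in closed form. The Gamma argument $p(m+\alpha)$ in the first ratio of $A_p^m$ is what one obtains by computing $\int_\mathbf{B}(1-|y|^2)^{p(m+\alpha)}\,dv(y)$ via Beta, which points to the choice
\[
  f(y)=(1-|y|^2)^{m+\alpha+n/p}.
\]
This sits in $L^p(\mathbf{B},d\tau)$ because $p(m+\alpha+n/p)-n=p(m+\alpha)>-1$, and a direct Beta integration yields $\|f\|_{L^p(d\tau)}=\pi^{n/(2p)}\bigl(\Gamma(p(m+\alpha)+1)/\Gamma(p(m+\alpha)+n/2+1)\bigr)^{1/p}$; upon inversion this already contributes the first Gamma-quotient of $A_p^m$.

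Directly integrating $|T_k^\alpha f|^p$ is unpleasant because $T_k^\alpha f(x)$ is a hypergeometric function of $|x|^2$ (by the formula derived inside the proof of Lemma \ref{lablem1}), so I would instead dualize. For any $g\in L^q(\mathbf{B},dv_{mp-n})$,
\[
  \|T_k^\alpha f\|_{L^p(\mathbf{B},dv_{mp-n})}\ge \frac{\langle T_k^\alpha f,g\rangle_{dv_{mp-n}}}{\|g\|_{L^q(\mathbf{B},dv_{mp-n})}},
\]
and I would take $g(x)=(1-|x|^2)^b$, with $b=0$ the cleanest case. Fubini turns the pairing into
\[
  c_\alpha\int_\mathbf{B} f(y)(1-|y|^2)^{\alpha-1}\,I_{b+mp-n+1,\,m+\alpha-b-mp+n-1}(y)\,dv(y),
\]
and the minimum-value identity \eqref{konstanta1} of Lemma \ref{lablem1} supplies the pointwise lower bound $I_{\alpha',s'}(y)\ge \pi^{n/2}\Gamma(\alpha')/\Gamma(\alpha'+n/2)$. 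What remains is a product of three Beta integrals, which combine under $c_\alpha\pi^{n/2}=\Gamma(n/2+\alpha)/\Gamma(\alpha)$ and $1/p+1/q=1$ into a single ratio of Gamma functions; choosing $b$ so that the $\Gamma(pm-n+1)$ arguments materialize yields $A_p^m$ after simplification.

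The conceptual scheme is short, but I expect the main obstacle to lie in the combinatorial bookkeeping: three Beta integrals contribute Gamma-quotients with shifted arguments, and one must rearrange them, via $\Gamma$-recursion and the duality relation $1/p+1/q=1$, into the precise numerator $\Gamma(p(m+\alpha)+n/2+1)\Gamma(m+\alpha+n/p+1)$ and denominator $\Gamma(p(m+\alpha)+1)\Gamma(m+\alpha+n/2+n/p+1)$ of $A_p^m$, while simultaneously reading off the $1/p$-th-power factor $(\Gamma(pm-n+1)/\Gamma(pm-n/2+1))^{1/p}$. Along the way one must verify admissibility: $a=m+\alpha+n/p>(n-1)/p$ is automatic from $m>(n-1)/p$ and $\alpha>0$, and $b+mp>n-1$ holds trivially at $b=0$. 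The strictness $A_p^m<\|T_k^\alpha\|$ (rather than $\le$) is built in because the pointwise bound $I_{\alpha',s'}(y)\ge I_{\alpha',s'}(0)$ is strict for every $y\neq 0$: the hypergeometric series representing $I_{\alpha',s'}$ has positive coefficients and is strictly increasing in $|y|^2$.
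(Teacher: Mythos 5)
Your proposal is correct and essentially reproduces the paper's argument: the upper bound comes from Lemma \ref{opasno} combined with the one-line comparison $(1-|x|^{2})^{mp-n}\le (1-|x|^{2})^{-n}$ (the paper's inequality \eqref{akoako}, which you should record explicitly since the lemma is stated for $L^{p}(\mathbf{B},d\tau)\rightarrow L^{p}(\mathbf{B},d\tau)$), and the lower bound uses the identical test function $(1-|y|^{2})^{m+\alpha+n/p}$ together with the minimum-value identity \eqref{konstanta1} of Lemma \ref{lablem1}. Your duality step against $g\equiv 1$ is only a cosmetic rearrangement of the paper's pointwise estimate $T_{k}^{\alpha}f(x)\ge T_{k}^{\alpha}f(0)$ --- both reduce to $[x,y]\le 1$ --- and it lands on exactly the constant computed in the paper's proof (which, note, carries the ratio $\Gamma(p(m+\alpha)+\frac{n}{2}+1)/\Gamma(p(m+\alpha)+1)$ to the power $1/p$, so your bookkeeping will reproduce the proof's bound rather than the displayed form of $A_{p}^{m}$).
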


\begin{proof}

At the beginning, we should point an easy inequality

$$\|T_{k}^{\alpha}f\|_{L^{p}(\mathbf{B},dv_{pm-n})}\leq \|T_{k}^{\alpha}f\|_{L^{p}(\mathbf{B},d\tau)},\enspace f\in L^{p}(\mathbf{B},d\tau),$$
i.e.,
\begin{equation}\label{akoako}\|T_{k}^{\alpha}\|_{L^{p}(\mathbf{B},d\tau)\rightarrow L^{p}(\mathbf{B},dv_{pm-n})}\leq \|T_{k}^{\alpha}\|_{L^{p}(\mathbf{B},d\tau)\rightarrow L^{p}(\mathbf{B},d\tau)}.\end{equation}

Further, we will consider the test function
$$\psi_{k}(x)=\beta^{-1}(1-|x|^{2})^{m+\alpha+\frac{n}{p}}, \enspace x\in \mathbf{B},$$
$\beta=\left(\frac{n\pi^{n/2}\Gamma(p(m+\alpha)+1)}{\Gamma(p(m+\alpha)+\frac{n}{2}+1)}\right)^{\frac{1}{p}}.$

Obviously, $\psi_{k}\in L^{p}(\mathbf{B},d\tau)$ and $\|\psi_{k}\|_{L^{p}(\mathbf{B},d\tau)}=1.$

The Lemma \eqref{konstanta}  implies the following sequence of inequalities:
\begin{equation}
\begin{split}
&\|T_{k}^{\alpha}\psi_{k}\|_{L^{p}(\mathbf{B},dv_{pm-n})}^{p}\\
&=\int_{\mathbf{B}}(1-|x|^{2})^{pm-n}\left|\int_{\mathbf{B}}\frac{\psi_{k}(y)}{[x,y]^{n+m+\alpha-1}}dv(y)\right|^{p}dv(x)\\
&=\beta^{-p}\int_{\mathbf{B}}(1-|x|^{2})^{pm-n}\left|\int_{\mathbf{B}}\frac{(1-|y|^{2})^{m+\alpha+\frac{n}{p}}}{[x,y]^{n+m+\alpha-1}}dv(y) \right|^{p}dv(x)\\
&= \beta^{-p}\int_{\mathbf{B}}(1-|x|^{2})^{pm-n}I_{(m+\alpha+\frac{n}{p}+1,-\frac{n}{p}-1)}^{p}(x)dv(x)\\
&\geq \beta^{-p}\min_{0\leq x\leq 1}I_{(m+\alpha+\frac{n}{p}+1,-\frac{n}{p}-1)}^{p}(x)\int_{\mathbf{B}}(1-|x|^{2})^{pm-n}dv(x)\\
&\geq \beta^{-p}\min_{0\leq x\leq 1}I_{(m+\alpha+\frac{n}{p}+1,-\frac{n}{p}-1)}^{p}(x)\frac{n\pi^{n/2}\Gamma(pm-n+1)}{\Gamma(pm-\frac{n}{2}+1)}.
\end{split}
\end{equation}
Finally,
\begin{equation}
\begin{split}
\|T_{k}^{\alpha}\|_{L^{p}(\mathbf{B},d\tau)\rightarrow L^{p}(\mathbf{B},dv_{pm-n})}&> \frac{\pi^{n/2}\Gamma(p(m+\alpha)+\frac{n}{2}+1)\Gamma(m+\alpha+\frac{n}{p}+1)}{\Gamma(p(m+\alpha)+1)\Gamma(m+\alpha+\frac{n}{2}+\frac{n}{p}+1)}\\
&\times\left(\frac{\Gamma(pm-n+1)}{\Gamma(pm-\frac{n}{2}+1)}\right)^{1/p}.
\end{split}
\end{equation}

\end{proof}
Now we are ready to prove the main result of this section.

\begin{theorem} Given $\alpha>0,$ there are constants $B_{p}^{m}$ and $D_{p}^{m}$ such that
\begin{equation}
\label{glavn}
B_{p}^{m}< \|P_{\alpha}\|_{L^{p}(\mathbf{B},d\tau)\rightarrow B^{p}}\leq C_{\alpha}^{m} {m+n-1\choose m}D_{p}^{m},
 \end{equation}

 \begin{equation}
 \begin{split}
 B_{p}^{m}&=\frac{\Gamma(
 \frac{n}{2})\Gamma(m+1)\Gamma(m+\frac{n}{2}+\alpha)\Gamma(\frac{n}{p}+\alpha)}{2\pi^{n/2}\Gamma(m+\frac{n}{p}+\alpha+\frac{n}{2})}\\
 &\times \left(\frac{n(n+m-1)\Gamma(\frac{n}{2})\Gamma(pm-n+1)}{2\Gamma(pm-\frac{n}{2}+1)}\right)^{1/p}.
 \end{split}
 \end{equation}
\end{theorem}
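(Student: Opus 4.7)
My plan is to split the proof into an upper bound and a lower bound. The upper bound I will obtain by combining the pointwise derivative estimate \eqref{druga} with Lemma \ref{opasno}, while the lower bound will come from exhibiting an explicit test function $f_0 \in L^p(\mathbf{B},d\tau)$ for which $\|P_\alpha f_0\|_{B^p}/\|f_0\|_{L^p(d\tau)}$ can be evaluated in closed form.

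For the upper bound, set $g = P_\alpha f$ and differentiate \eqref{onto} under the integral sign as in \eqref{onto1}. The pointwise bound \eqref{druga} together with the fact that there are $\binom{m+n-1}{m}$ multi-indices of length $m$ gives
$$|\partial^m g(x)| \le C_\alpha^m\binom{m+n-1}{m}\,T_k^{\alpha}|f|(x),$$
where $T_k^{\alpha}$ is the operator from \eqref{pomocni1}. Since the $B^p$ semi-norm \eqref{normaprava1} is exactly the $L^p(\mathbf{B},dv_{mp-n})$-norm of $\partial^m g$, the right-hand inequality in \eqref{glavn} then follows from \eqref{akoako} combined with Lemma \ref{opasno}.

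For the lower bound, I propose the test function $f_0(y) = h_m(y)(1-|y|^2)^c$, where $h_m \in H_m(\mathbb{R}^n)$ is a suitably normalised homogeneous harmonic polynomial of degree $m$ (a natural candidate is a rescaled zonal harmonic $Z_m(\cdot,\xi_0)$ with $\xi_0 \in \mathbb{S}$ fixed, since the factor $n(n+m-1)$ in $B_p^m$ suggests that this is where the appropriate dimensional constants enter) and $c$ is a parameter chosen so that every integral in the sequel collapses to a Beta function. Inserting the zonal expansion \eqref{tezina} of $R_\alpha$ into the formula for $P_\alpha f_0(x)$ and exploiting the homogeneity $Z_k(x,t\eta) = t^k Z_k(x,\eta)$ together with the reproducing identity $\int_{\mathbb{S}} h_m(\eta)Z_k(x,\eta)\,d\sigma(\eta) = \delta_{km}\,h_m(x)$, only the $k=m$ term in the kernel survives; the surviving radial integral evaluates to $\tfrac12 B(n/2+m,c+\alpha)$. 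This identifies $P_\alpha f_0$ as an explicit constant multiple of $h_m$, hence a harmonic polynomial of degree $m$, so that each of its $m$-th order partial derivatives is a constant and
$$\|P_\alpha f_0\|_{B^p}^p \;=\; (\mathrm{const})^p\,|\partial^m h_m|^p \int_{\mathbf{B}}(1-|x|^2)^{mp-n}\,dv(x) \;=\; (\mathrm{const})^p\,|\partial^m h_m|^p\,\frac{\pi^{n/2}\Gamma(pm-n+1)}{\Gamma(pm-\tfrac{n}{2}+1)},$$
which reproduces the quantity under the $p$-th root in $B_p^m$. The denominator $\|f_0\|_{L^p(\mathbf{B},d\tau)}$ is then computed by polar coordinates and factors as $\int_{\mathbb{S}}|h_m|^p\,d\sigma$ times a further radial Beta integral; tuning $c$ and normalising $h_m$ so that $|\partial^m h_m|^p/\int_{\mathbb{S}}|h_m|^p\,d\sigma$ produces the factor $n(n+m-1)$ yields, after cancellation of the gamma functions, the strict lower bound $B_p^m$.

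The main obstacle lies in the lower bound: one must pin down the correct harmonic polynomial $h_m$ and exponent $c$ so that the gamma-function arithmetic matches $B_p^m$ exactly, and in particular so that the somewhat peculiar factor $n(n+m-1)$ emerges. Verifying that $f_0 \in L^p(\mathbf{B},d\tau)$ for the chosen $c$ and that the differentiation/integration interchanges are justified by the absolute and uniform convergence of \eqref{tezina} on compact subsets is routine. The upper bound, by contrast, is an essentially formal consequence of \eqref{druga}, \eqref{akoako} and Lemma \ref{opasno}.
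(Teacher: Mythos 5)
Your proposal follows essentially the same route as the paper: the upper bound is obtained exactly as you describe, by combining \eqref{druga} with the multi-index count, \eqref{akoako} and Lemma \ref{opasno} (the paper routes the counting through Jensen's inequality but lands on the same $\binom{m+n-1}{m}$ factor), and for the lower bound the paper uses precisely your candidate test function, a normalised $Z_m(\cdot,e_1)(1-|y|^2)^{c}$ with $c=n/p$ and normalising constant $\bigl(n|\mathbb{S}|/(n+m-1)\bigr)^{1/p}\dim H_m(\mathbb{R}^n)$, after which the zonal orthogonality and Beta-integral computation proceed as you outline. The only piece you leave open --- pinning down $c$ and the normalisation so the gamma functions match $B_p^m$ --- is resolved in the paper exactly along the lines you anticipate.
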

\begin{proof}
{\bf The upper estimate}

First of all, we shall underline the initial inequality
$$|\partial_{k}^{m}P_{\alpha}f|\leq C_{\alpha}^{m}\sum_{|k|=m}|T_{\alpha}^{k}|,$$
for any multi-index $k,$ such that $|k|=m.$

Jensen's inequality give us
\begin{equation}
\begin{split}
\|P_{\alpha}f\|_{B^{p}}^{p}&=\int_{\mathbf{B}}(1-|x|^{2})^{mp-n}|\partial^{m} P_{\alpha}f(x)|^{p}dv(x)\\
&\leq {m+n-1\choose m}^{p-1}\sum_{|k|=m}\int_{\mathbf{B}}(1-|x|^{2})^{mp-n}|\partial_{k}^{m} P_{\alpha}f(x)|^{p}dv(x)\\
&\leq (C_{\alpha}^{m})^{p} {m+n-1\choose m}^{p-1}\sum_{|k|=m}\int_{\mathbf{B}}(1-|x|^{2})^{mp-n}| T_{\alpha}^{k}f(x)|^{p}dv(x),
\end{split}
\end{equation}
i.e.,
$$\|P_{\alpha}f\|_{B^{p}}\leq C_{\alpha}^{m}{m+n-1\choose m}\|T_{\alpha}^{k}f\|_{ L^{p}(\mathbf{B},dv_{mp-n})}.$$

Finally, the Theorem \eqref{opasnat} gives
$$ \|P_{\alpha}\|_{L^{p}(\mathbf{B},d\tau)\rightarrow B^{p}}\leq C_{\alpha}^{m} {m+n-1\choose m}D_{p}^{m}.$$

{\bf The lower estimate}
We  observe the function  $f_{m}(x)=c^{-1}Z_{m}(x,x_{0})(1-|x|^{2})^{\frac{n}{p}},$ where $m\in \mathbb{N},$ we repeat, is a fixed non-negative integer which is determined as a degree of derivative in definition of Besov norm in \eqref{normaprava1} and $x_{0}=e_1.$

Since,
\begin{equation*}
\begin{split}
\int_{\mathbf{B}}|Z_{m}(x,x_{0})|^{p}dv(x)&=n\int_{0}^{1}r^{m+n-1}dr\int_{\mathbb{S}}|Z_{m}(\xi,x_{0})|^{p}d\sigma(\xi)\\
&\leq \frac{n|\mathbb{S}|}{n+m-1}\left(\dim{H_{m}(\mathbb{R}^{n})}\right)^{p},\\
\end{split}
\end{equation*}
we chose constant $c$ to be $c= \left(\frac{n|\mathbb{S}|}{n+m-1}\right)^{1/p}\dim{H_{m}(\mathbb{R}^{n})},$
 where $\dim{H_{m}(\mathbb{R}^{n})}={n+m-1\choose n-1}-{n+m-3\choose n-1}$ (see \eqref{dimension}. Therefore, $\|f_{m}(x)\|_{L^{p}(\mathbf{B},d\tau)}\leq 1.$

On the other hand,
\begin{equation}
\begin{split}
P_{\alpha}f_{s}(x)&=\int_{\mathbf{B}}R_{\alpha}(x,y)f_{s}(y)dv_{\alpha}(y)\\
&=c^{-1}c_{\alpha}\omega_{\alpha}\sum_{d=0}^{\infty}\int_{\mathbf{B}}A_{d}^{n,\alpha}Z_{d}(x,y)Z_{m}(y,x_0)(1-|y|^{2})^{\frac{n}{p}+\alpha-1}dv(y)\\
&=c^{-1}c_{\alpha}\omega_{\alpha}A_{m}^{n,\alpha}\int_{0}^{1}(1-t^{2})^{\frac{n}{p}+\alpha-1}t^{2m+n-1}dt\int_{\mathbb{S}}Z_{m}(x,\xi)Z_{m}(\xi,x_0)d\sigma(\xi)\\
&=c^{-1}\frac{\Gamma(\frac{n}{2})\Gamma(m+\frac{n}{2}+\alpha)\Gamma(\frac{n}{p}+\alpha)}{2\pi^{n/2}\Gamma(\alpha)\Gamma(m+\frac{n}{p}+\alpha+\frac{n}{2})}Z_{m}(x,x_0)\\
&=M_{\alpha,p}(m,x_0)Z_{m}(x,x_0),
\end{split}
\end{equation}
where
$M_{\alpha,p}(m,x_0)=c^{-1}\frac{\Gamma(\frac{n}{2})\Gamma(m+\frac{n}{2}+\alpha)\Gamma(\frac{n}{p}+\alpha)}{2\pi^{n/2}\Gamma(\alpha)\Gamma(m+\frac{n}{p}+\alpha+\frac{n}{2})}.$
\begin{equation*}
\begin{split}
&\|P_{\alpha}f\|_{L^{p}(\mathbf{B},dv_{mp-n})}^{p}\\
&=\left(M_{\alpha,p}(m,x_0)\right)^{p}\int_{\mathbf{B}}(1-|x|^{2})^{pm-n}\left|\partial^{m}P_{\alpha}f(x)\right|^{p}dv(x)\\
&=\left(M_{\alpha,p}(m,x_0)\right)^{p}\int_{\mathbf{B}}(1-|x|^{2})^{pm-n}\left|\sum_{|k|=m}\partial_{k}^{m}Z_{m}(x,x_0)\right|^{p}dv(x)\\
&= \left(m!\dim{H_{m}(\mathbb{R}^{n})}M_{\alpha,p}(m,x_0)\right)^{p}\int_{\mathbf{B}}(1-|x|^{2})^{pm-n}dv(x)\\
&= \left(m!\dim{H_{m}(\mathbb{R}^{n})}M_{\alpha,p}(m,x_0)\right)^{p} \frac{n\pi^{n/2}\Gamma(pm-n+1)}{\Gamma(pm-\frac{n}{2}+1)}.
\end{split}
\end{equation*}
\end{proof}

\begin{remark}
According to the introduced semi-norm \eqref{normaprava1} the (semi)inner-product associate with the space $B^{2}$ (harmonic Dirichlet space) is
\begin{equation}
\label{scalarp}
\left<f,g\right>=\sum_{|k|=m}\int_{\mathbf{B}}(1-|x|^{2})^{2m}\frac{\partial^{m}f}{\partial x^{k}}(x)\overline{ \frac{\partial^{m}g}{\partial x^{k}}(x)}d\tau(x),\enspace f,g\in B^{2}.
\end{equation}
Finding the "exact" operator norm for the harmonic Bergman projection in the Hilbert case, $P_{\alpha}:L^{2}(\mathbf{B},d\tau)\rightarrow B^{2},$ is another related problem which stays open.
\end{remark}
\begin{conjecture}
For $1<p<\infty,$
$$\|P_{\alpha}\|_{L^{p}(\mathbf{B},d\tau)\rightarrow B^{p}}\approx {m+n-1\choose m}D_{p}^{m}.$$
\end{conjecture}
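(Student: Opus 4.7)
The upper bound $\|P_\alpha\|_{L^p(\mathbf{B},d\tau)\to B^p} \lesssim \binom{m+n-1}{m}D_p^m$ is essentially the content of the preceding theorem, provided the factor $C_\alpha^m$ coming from the crude pointwise estimate \eqref{druga} can be absorbed into an $n,\alpha$-dependent implicit constant. The real substance of the conjecture is a matching lower bound: the proven $B_p^m$ is too small because the single-zonal test function $f_m = c^{-1}Z_m(\cdot,x_0)(1-|\cdot|^2)^{n/p}$ probes only a one-dimensional component of $P_\alpha$, and so it sees neither the combinatorial factor $\binom{m+n-1}{m}$ coming from the number of order-$m$ partials, nor the sharp Schur constant $D_p^m$ of Lemma \ref{opasno}.

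My plan proceeds in two steps. \textbf{Step 1.} Show that, for any fixed multi-index $k$ with $|k|=m$, $\|T_k^\alpha\|_{L^p(\mathbf{B},d\tau)\to L^p(\mathbf{B},dv_{mp-n})} \gtrsim D_p^m$, so that the Schur bound of Lemma \ref{opasno} is sharp. The natural test family is $\phi_\beta(y) = (1-|y|^2)^{\beta}$ with $\beta$ a perturbation of the Schur exponent $c = (n+m+\alpha-1)/q + m/p$; an application of Lemma \ref{lablem1} to both $\|\phi_\beta\|_{L^p(\mathbf{B},d\tau)}$ (an explicit beta integral) and $T_k^\alpha\phi_\beta$ (an instance of $I_{c+\alpha,-c+m}$), followed by optimization in $\beta$, should recover $D_p^m$ in the limit. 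This is the standard mechanism by which Schur's test is known to be sharp for integral kernels of this type. \textbf{Step 2.} Transfer sharpness from a single $T_k^\alpha$ to $P_\alpha$ by establishing a collective pointwise lower estimate
\begin{equation*}
\sum_{|k|=m}\bigl|\partial_x^k R_\alpha(x,y)\bigr|^p \gtrsim \binom{m+n-1}{m}\,[x,y]^{-p(n+m+\alpha-1)}
\end{equation*}
on a region of $(x,y)$ of positive $dv\otimes dv$-mass accumulating at the boundary diagonal. Combined with the Step 1 test functions this introduces the missing combinatorial factor and yields the desired lower bound on $\|P_\alpha\|$.

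\textbf{Main obstacle.} The hard part is Step 2: \eqref{druga} is one-sided, and cancellations in the series \eqref{tezina} can kill $|\partial_x^k R_\alpha|$ for any individual multi-index $k$, so one cannot hope to reverse \eqref{druga} term by term. Instead one must prove the weaker but still delicate statement that summing over all $|k|=m$ yields a quantity genuinely of size $\binom{m+n-1}{m}[x,y]^{-p(n+m+\alpha-1)}$. This amounts to asymptotic non-cancellation of the leading order-$m$ partials of the zonal harmonics $Z_j$ in the expansion \eqref{tezina} of $R_\alpha$ — a combinatorial, \emph{not} merely analytic, property of zonal harmonics, and in my view the combinatorial heart of the conjecture. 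A successful attack will likely need the explicit Gegenbauer representation of $Z_j$ together with a uniform asymptotic analysis as $j\to\infty$ of $\partial^k Z_j$ in directions tangent to the sphere, echoing but substantially generalizing the one-dimensional analytic case of \cite{djvu}.
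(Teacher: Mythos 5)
There is nothing in the paper to compare your attempt against: the statement you were given is explicitly labelled a \emph{conjecture}, and the author offers no proof of it (the paper proves only the weaker two-sided bound $B_p^m < \|P_\alpha\| \le C_\alpha^m\binom{m+n-1}{m}D_p^m$, in which the lower constant $B_p^m$ is not claimed to be comparable to $\binom{m+n-1}{m}D_p^m$). Your submission is, by your own framing, a research plan rather than a proof, and it does not close the conjecture.

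That said, your diagnosis of where the difficulty lies is accurate: the upper half of the asserted equivalence is the paper's theorem (with the $\approx$ constants absorbing $C_\alpha^m$), and the missing content is a lower bound matching $\binom{m+n-1}{m}D_p^m$, which the single test function $f_m$ cannot see. But both of your steps are left unproved, and Step 1 rests on a premise you should not take for granted. Schur's test is in general not sharp, and the particular constant $D_p^m$ in this paper is not even the infimum of the Schur bounds over admissible test exponents: it is obtained by fixing one specific $c=\frac{n+|k|+\alpha-1}{q}+\frac{|k|}{p}$ and then further enlarging $\tilde D_p^{|k|}$ via Jensen's inequality applied to $\log\Gamma$. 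So even if you succeeded in showing that the operator norm of $T_k^\alpha$ equals the optimal Schur constant (in the spirit of \cite{Boo1}), that would give you a lower bound by $\inf_c(\cdot)$, not by $D_p^m$; you would still have to show that the Jensen relaxation and the specific choice of $c$ lose only a bounded factor, uniformly in whatever parameter the $\approx$ is meant to be uniform in (the conjecture does not even specify this, which is itself a gap you should address before attempting a proof). Step 2, the non-cancellation of $\sum_{|k|=m}|\partial_x^k R_\alpha(x,y)|$ near the boundary diagonal, is correctly identified as the heart of the matter, but you give no argument for it, only a statement of what would need to be true. As it stands, the proposal identifies plausible intermediate targets but establishes neither of them.
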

In the correspondence to the main result from \cite{Boo1}, the related question to the above conjecture would be the examining of the behavior for the norm $\|P_{\alpha}\|_{L^{p}(\mathbf{B},d\tau)\rightarrow B^{p}}$ when $p$ is fixed and $\alpha$ changes with special treatment of the case $\alpha\rightarrow 0.$

\end{document}